\newtheorem{thm}[equation]{Theorem}
\newtheorem{lem}[equation]{Lemma}
\newtheorem{prop}[equation]{Proposition}
\theoremstyle{definition}
 \numberwithin{equation}{section}
\def\P{{\mathbb{P}}}
\def\N{{\mathbb{N}}}
\def\R{{\mathbb{R}}}
\def\E{{\mathbb{E}}}
\def\8{\infty}
\renewcommand{\a}{\alpha}
\begin{document}

\title{Tail asymptotics  of maximums on trees in the critical case}
\author[M. Ma\'slanka]{Mariusz Ma\'slanka}
\address{M. Ma\'slanka\\ Instytut Matematyczny\\ Uniwersytet Wroclawski\\ 50-384 Wroclaw\\
pl. Grunwaldzki 2/4\\ Poland}
\email{maslanka@math.uni.wroc.pl}

\subjclass[2010]{60H25, 60J80, 60K05}

\keywords{Maximum recursion, stochastic fixed point equation, weighted branching process, branching random walk, power law distributions.}

\thanks{The research was partially supported by the National Science Centre, Poland (grant number UMO-2014/15/B/ST1/00060).}

\begin{abstract}
We consider solutions to the maximum recursion on weighted branching trees given by$$X\,{\buildrel d\over=}\,\bigvee_{i=1}^{N}{A_iX_i}\vee B,$$where $N$ is a random natural number, $B$ and $\{A_i\}_{i\in\mathbb{N}}$ are random positive numbers and $X_i$ are independent copies of $X$, also independent of $N$, $B$, $\{A_i\}_{i\in\mathbb{N}}$. Properties of solutions to this equation are governed mainly by the function $m(s)=\mathbb{E}\big[\sum_{i=1}^NA_i^s\big]$. Recently, Jelenkovi\'c and Olvera-Cravioto proved, assuming e.g. $m(s)<1$ for some $s$, that the asymptotic behavior of the endogenous solution $R$ to the above equation is power-law, i.e.$$\mathbb{P}[R>t]\sim Ct^{-\alpha}$$for some $\alpha>0$ and $C>0$. In this paper we assume $m(s)\ge 1$ for all $s$ and prove analogous results.
\end{abstract}

\maketitle
\section{Introduction}
In this paper we study the maximum recursion on trees
\begin{equation}\label{eq:1}
X \,{\buildrel d \over =}\, \bigvee_{i=1}^{N}{A_i X_i} \vee B,
\end{equation}
where $N$ is a random natural number, $B$ and $\{A_i\}_{i\in\N}$ are random positive numbers and $X_i$ are independent copies of $X$, which are independent also of $N$, $B$, $\{A_i\}_{i\in \N}$. Our main goal is to describe asymptotic properties of the endogenous solution to \eqref{eq:1} (in the sense of \cite{AB}).

Observe that for $N=1$ a.s. equation \eqref{eq:1} is just the random extremal equation considered by Goldie \cite{G}. Moreover, in this case, taking logarithm of both sides of the equation, we obtain the classical Lindley's equation related to the reflected random walk. In general, equation \eqref{eq:1} is called high-order Lindley equation and is a useful  tool in studying branching random walks. We refer to  Aldous, Bandyopadhyay \cite{AB} and Jelenkovi\'c, Olvera-Cravioto \cite{JOC}
for a more complete bibliography on the subject and description of a class of other related stochastic equations.

We begin with explaining how to construct an endogenous solution to equation \eqref{eq:1}. Let $\mathcal{T} =\bigcup_{k \geq 0} \mathbb{N}^k$ be an infinite Ulam-Harris tree, where $\mathbb{N}^0 = \{ \varnothing \}$. For $v = (i_1, . . . , i_n)$ we define the length $|v| = n$ and by $vi$ we denote the vertex $(i_1, i_2, . . . , i_n, i)$. We write $u < v$ if $u$ is a proper prefix of $v$, i.e. $u = (i_1, .., i_k)$ for some $k < n$. Moreover we write $u \leq v$ if $u < v$ or $u = v$. Now we take $\{(N(v),B(v), A_1(v), A_2(v), . . .)\}_{v \in \mathcal{T}}$ a family of i.i.d. copies of $(N,B, A_1, A_2, . . .)$ indexed by the vertices of $\mathcal{T}$. Since equation \eqref{eq:1} depends only on $N$ first values of $A_i$'s, we can assume that $A_i(v)=0$ for every $v\in\mathcal{T}$ and $i> N(v)$.
For $v \in \mathcal{T}$ we also define a random variable $L(\varnothing) = 1$ and $L(vi) = L(v)A_i(v)$. We define
\begin{equation}\label{eq:2}
R = \bigvee_{v \in \mathcal{T}}{L(v)B(v)}.
\end{equation}
One can easily deduce that if
 the maximum above is finite almost surely then the random variable $R$ satisfies \eqref{eq:1}.

The properties of $R$ are governed by the function
$$
m(s) = \E \left[ \sum_{i=1}^N A_i^s \right].
$$ Jelenkovi\'c and Olvera-Cravioto \cite{JOC} recently studied existence and asymptotic properties of $R$ in the case when the equation $m(s)=1$ has two solutions $\a<\beta$ and $\a<1$. They proved, under a number of further assumptions, that $R$ has a power-law distribution of order $\beta$, i.e.
$$
\P[R>t] \sim C t^{-\beta}, \qquad t\to\infty
$$ for some $C>0$. In this paper we consider the critical case, when the equation $m(s)=1$ has exactly one solution $\a$ and then $m'(\a)=0$. Our main result is the following.
\begin{thm}\label{th1}
Suppose that
\begin{enumerate}[label=(A{\arabic*})]
\item \label{st:0} $\P\left[B > 0 \right] > 0$ ,
\item \label{st:1} There exists $\a$ such that $m(\a) = \E\left[ \sum_{i=1}^N{A_i^{\a}}\right] = 1 $,
\item \label{st:2} $m'(\a) = \E\left[ \sum_{i=1}^N{A_i^{\a} \log{A_i}} \right] = 0$,
\item \label{st:3}$\E[N] > 1$,
\item \label{st:4}For some $j$ the measure $\P[\log{A_j} \in du, A_j >0, N\geq j]$ is non-arithmetic,
\item \label{st:5}$\E\left[B^{\a +\delta} + N^{1+\delta} + \sum_{i=1}^N \left(A_i^{-\delta } + A_i^{\a + \delta} \right) \right] < \infty $, for some $\delta > 0$.
\end{enumerate}

Then the solution $R$ of \eqref{eq:1} given by \eqref{eq:2} is well defined and

$$
\lim_{t \to \infty} t^{\a} \P[R > t] = C,
$$
for some constant  $ C >0$.
\end{thm}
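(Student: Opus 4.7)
The plan is to adapt the implicit-renewal strategy of Goldie \cite{G} and Jelenkovi\'c--Olvera-Cravioto \cite{JOC} to the critical regime $m(\alpha)=1$, $m'(\alpha)=0$. Writing $g(u):=e^{\alpha u}\,\P[R>e^{u}]$ and unwinding \eqref{eq:1} once should produce a renewal-type equation driven by the measure
$$\mu(ds)\;:=\;\E\Big[\sum_{i=1}^{N}A_{i}^{\alpha}\,\mathbbm{1}_{\log A_{i}\in ds}\Big].$$
By (A2), $\mu$ is a probability measure, and by (A3) its mean is $m'(\alpha)=0$. Thus, unlike in \cite{JOC} where the analogous kernel has strictly positive drift so that Blackwell's theorem can be applied, in our setting the underlying random walk is centered and recurrent; this is the central difficulty of the problem.

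Concretely I would proceed in four steps. \textbf{(i) Finiteness and a uniform bound.} Combining (A6) with many-to-one estimates and the known asymptotics for the minimum of a critical branching random walk (which grows like $\tfrac{3}{2\alpha}\log n$), show $R<\infty$ a.s., deduce $\E[R^{\alpha-\varepsilon}]<\infty$ for some $\varepsilon>0$, and conclude that $g$ is uniformly bounded. \textbf{(ii) Renewal equation.} Decomposing on the first generation of the tree as in \cite{G,JOC}, write
$$g(u)\;=\;\eta(u)\,+\,\int_{\R}g(u-s)\,\mu(ds),$$
where $\eta\ge 0$ is an explicit forcing term whose directly Riemann integrability is ensured by $\E[B^{\alpha+\delta}]<\infty$ and the other moment bounds in (A6). \textbf{(iii) Non-arithmeticity.} Check from (A5) that $\mu$ is non-arithmetic. \textbf{(iv) Limit $g(u)\to C$.} Apply a renewal theorem for centered non-arithmetic kernels to conclude convergence of $g(u)$ to a constant $C$, and show $C>0$ using (A1) (so that $\eta\not\equiv 0$) together with (A4) (to rule out trivialisation by sub-criticality of the embedded Galton--Watson tree).

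The main obstacle is Step (iv). Because $\int s\,\mu(ds)=0$ the renewal measure $\sum_{n\ge 0}\mu^{*n}$ is not locally finite, so the Blackwell--Feller renewal theorem invoked in \cite{G,JOC} is unavailable. I would instead leverage the uniform bound on $g$ from Step (i) to extract subsequential limits $g(\cdot+u_{n})\to g_{\infty}$; passing to the limit in the renewal equation shows that $g_{\infty}$ is bounded and $\mu$-harmonic, hence constant by Choquet--Deny, and the equicontinuity needed to promote subsequential to full convergence should follow from a smoothing argument exploiting the $A_{i}^{-\delta}$ moment in (A6). Verifying $C>0$ is the second subtle point, since centered renewal equations typically degenerate to the trivial solution; exploiting (A1), (A4), and the fact that the forcing term $\eta$ built from $B$ is not integrated away in the limit should be what makes the critical case tractable here.
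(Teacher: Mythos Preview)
Your outline correctly identifies the central difficulty (the kernel $\mu$ is centered) and the relevant harmonic-analysis tool (Choquet--Deny), but Step~(iv) has a genuine gap. Choquet--Deny together with equicontinuity only yields that every subsequential limit of $g(\cdot+u_n)$ is a constant, or equivalently that $g(x+y)/g(x)\to 1$ for each fixed $y$; it does \emph{not} show that these constants coincide, and hence does not by itself give convergence of $g(u)$. In the paper this ratio statement is merely a preliminary step (Proposition~\ref{prop:4}). The actual identification of the limit proceeds by stopping the martingale $\breve D(x+S_n)-\sum_{i<n}\breve G(x+S_i)$ at the first strict descending ladder time $L$ and using duality to rewrite the resulting identity as a renewal equation driven by the \emph{ascending} ladder heights $S_{T_k}$, which have strictly positive mean; only at that level does the key renewal theorem apply. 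A first application forces $\int_{\R}\breve G(x)\,dx=0$: thus, contrary to your claim, the forcing term \emph{is} integrated away in the critical case, and your proposed argument for $C>0$ via ``$\eta\not\equiv 0$'' cannot succeed. A second application, now to the antiderivative $\overline G(x)=\int_{-\infty}^x\breve G$, identifies the limit as $\int_{\R} x\,\breve G(x)\,dx\big/\big(\E[S_{T_1}]\,\E[S_L]\big)$. This two-pass ladder argument is the crux of the critical case and is absent from your proposal.

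There is a second omission: you never establish the \emph{lower} bound $\P[R>t]\ge C\,t^{-\alpha}$. This is needed (a) to make the ratio $h_x(y)=\breve D(x+y)/\breve D(x)$ well-defined and bounded in the Choquet--Deny step, and (b) to conclude $C>0$ at the end. In the paper (Lemma~\ref{prop1}) the lower bound is obtained by comparison with the additive smoothing transform \eqref{eq:smooth}, whose critical tail is known from \cite{BK}; a self-contained second-moment argument is also provided in Appendix~\ref{appendix}. Finally, a minor correction: your forcing term is not nonnegative. In the notation of the paper it equals $-G=f_2-f_1$ with both $f_1,f_2\ge 0$, and indeed its integral vanishes as noted above.
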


Equation \eqref{eq:1} is similar to the linear stochastic equation (called also the smoothing transform)
\begin{equation}\label{eq:smooth}
X \,{\buildrel d \over =}\, \sum_{i=1}^{N}{A_i X_i} + B,
\end{equation}
  where $X_i$ are independent copies of $X$, which are also independent of a given sequence of non-negative random variables $(N,B,A_1,A_2,\ldots)$. This equation was considered in a number of papers, see e.g. \cite{bur-spa,BK,DL,JOC0,L, L2}. In these  papers existence and some further properties, including asymptotic behavior, of solutions to \eqref{eq:smooth} were considered.
 In particular, the techniques described there can be applied in our settings to study equation \eqref{eq:1}. The details will be given in next sections. 

\medskip

Our model is closely related to the branching random walks, which can be defined as follows. 
An initial ancestor is located at the origin. 
Its $N$ children, the first
generation,  are placed in $\R$ according to the distribution of the point process $\Theta =\{-\log A_i\}_{i=1}^N$, where $N$ and $\{A_i\}$ are as in \eqref{eq:1}. Each of the particles  produces
its own children which  are positioned (with
respect to their parent) according to the same distribution of $\Theta$ and they form the second generation. And so on. The resulting system
is called a branching random walk.

Notice that if $B =1$, then $M = -\log R$  describes the global minimum of  the branching random walk, that is the leftmost position of all the particles in the system. Thus our Theorem \ref{th1} implies that 
$$
\lim_{t \to \infty} e^{\a t} \P[M < -t] = C
$$ and $C>0$. The same result, however under much weaker hypotheses and using different techniques based on the spinal decomposition, was recently proved by Madaule \cite{M}.

\section{Upper and lower estimates of $R$}
The aim of this section is to provide upper and lower estimates for $R$ defined in \eqref{eq:2}, that is to prove under assumptions given in Theorem        \ref{th1}:
$$
 \frac{1}{C}  t^{-\a} \le \P[R>t] \le C t^{-\a}
$$ for some constant $C >0$ and large $t$. Notice that the upper estimate implies in particular that $R$ given by \eqref{eq:2} is finite a.s.

Before giving proofs we recall a useful tool, called the many-to-one formula.  First, let us introduce a random variable $Y$ with distribution given by
\begin{equation}\label{eq:3}
\E[f(Y)] = \E\left[ \sum_{i=1}^N f(-\log{A_i}) A_i^{\a} \right],
\end{equation}
for any positive Borel function $f$. By \ref{st:1} the right hand side of the above defines a probability measure. Moreover \ref{st:2}, \ref{st:4} and \ref{st:5} imply that
the random variable $Y$ is centered, non-arithmetic  and has finite exponential moments, i.e.
\[
\E\left[ e^{\pm \delta Y} \right] < \infty,
\]
for some $\delta > 0$.

Now, let {$\{Y_i\}$} be a sequence of independent copies of $Y$ defined by \eqref{eq:3} and let $S_n$ be the sequence of their partial sums, $S_n = \sum_{k=1}^nY_k$. For a fixed $n$ and any test function $f: \mathbb{R}^n \to \mathbb{R}$, the following  many-to-one formula holds
\begin{equation}\label{eq:5}
\E\left[e^{\a S_n} f(S_1,...,S_n) \right] = \E\bigg[ \sum_{|v| = n} f(-\log{L(v_1)},...,-\log{L(v_n)}) \bigg],
\end{equation}
see e.g. Theorem 1.1 in Shi \cite{Shi}.

\begin{lem}\label{prop1}
Under the assumptions of Theorem \ref{th1} there is a strictly positive constant $C$ such that for large $t$
$$
\frac{1}{C}  \leq t^{\a} \P[R > t] \leq C.
$$
\end{lem}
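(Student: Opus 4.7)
The plan is to prove the two inequalities separately. Both directions rest on the many-to-one formula \eqref{eq:5} combined with the observation that under \ref{st:1}--\ref{st:5} the walk $(S_n)$ is non-arithmetic, centered (by \ref{st:2}) and has exponential moments (by \ref{st:5}), putting the classical toolbox for one-dimensional centered random walks at our disposal: local limit theorem, Kemperman-type first-passage identities, ballot estimates, and exponential Chebyshev.

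For the upper bound I would start from the antichain inequality
$$\mathbf{1}\{R>t\}\leq\sum_{n\ge 0}\#\bigl\{|v|=n:L(v)B(v)>t,\ L(u)B(u)\leq t\ \forall u<v\bigr\},$$
valid since the vertices realising a first crossing form an antichain in $\mathcal{T}$. Taking expectations and applying \eqref{eq:5} with the path-dependent indicator of a first crossing, introducing i.i.d.\ copies $\widetilde B_k$ of $B$ independent of $(S_k)$, gives
$$\P[R>t]\leq\sum_{n\ge 0}\E\bigl[e^{\a S_n}\mathbf{1}\{\widetilde B_n e^{-S_n}>t,\ \widetilde B_k e^{-S_k}\leq t\ \forall k<n\}\bigr].$$
On the first-crossing event one has $e^{\a S_n}\leq\widetilde B_n^{\a} t^{-\a}$, which cleanly extracts the desired factor $t^{-\a}$. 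The remaining task is to bound the resulting sum uniformly in $t$; because $(S_n)$ is centered and hence recurrent, the naive estimate diverges, and this is the main technical obstacle. I would insert an auxiliary barrier (e.g.\ $S_k\geq -c\log k$, or a concave curve in the spirit of critical branching random walk arguments), treat barrier-violating excursions by exponential Chebyshev using the moment assumption \ref{st:5}, and control the barrier-respecting contribution by a Kemperman / ballot identity for centered walks with exponential moments.

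For the matching lower bound I would introduce, for a suitable barrier event $\mathcal{B}$ on the ancestral paths, the count
$$Z=\#\bigl\{v\in\mathcal{T}:L(v)B(v)>t,\ \text{the ancestors of } v \text{ satisfy }\mathcal{B}\bigr\}.$$
Many-to-one gives $\E Z$ as a sum of expectations of the form $\E[e^{\a S_n}\mathbf{1}\{\cdot\}]$; choosing $\mathcal{B}$ so that the bulk of the mass concentrates in an $O(1)$ window around $-\log t+\log b_0$ for some $b_0$ in the support of $B$ (using \ref{st:0} and the non-arithmeticity \ref{st:4}), the local limit theorem for $(S_n)$ then yields $\E Z\gtrsim t^{-\a}$ after summing over the relevant range of $n$. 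Converting this first-moment estimate into a lower bound on $\P[R>t]=\P[Z\geq 1]$ I would apply the Paley--Zygmund inequality, $\P[Z\geq 1]\geq (\E Z)^2/\E[Z^2]$; decomposing the pair sum defining $\E[Z^2]$ according to the most recent common ancestor reduces matters to a handful of double many-to-one expectations, in which the moments $\E[N^{1+\delta}]$, $\E[\sum_i A_i^{\a+\delta}]$ and $\E[\sum_i A_i^{-\delta}]$ from \ref{st:5} enter precisely to bound the sibling cross-terms by a multiple of $\E Z$, so that $\P[R>t]\gtrsim t^{-\a}$ follows.
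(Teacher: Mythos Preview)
Your lower-bound plan via the second-moment (Paley--Zygmund) method is essentially the argument the paper gives in its Appendix for the special case $N$ constant and $B\equiv 1$; so that half is fine and matches one of the paper's two proofs of the lower estimate.

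The main body of the paper, however, takes a completely different and much shorter route for \emph{both} directions. It observes that $R=\bigvee_v L(v)B(v)\le \widetilde R:=\sum_v L(v)B(v)$, where $\widetilde R$ is the endogenous fixed point of the smoothing transform, for which Buraczewski--Kolesko already proved $\P[\widetilde R>t]\sim \widetilde C t^{-\a}$ in the critical case (when $\a<1$). The upper bound is then a one-line comparison. For the lower bound the paper writes $\P[\widetilde R>Mt]\le\P[R>t]+\P[R\le t,\widetilde R>Mt]$ and shows, via many-to-one and the bounded function $W(x)=\E\bigl[\sum_i e^{-\delta(x+S_i)}\mathbf 1(S_j+x\ge 0,\ j\le i)\bigr]$, that the second term is $\le C_1 M^{-(\a+\delta)}t^{-\a}$; choosing $M$ large beats the known lower bound $C_2 M^{-\a}t^{-\a}$ for $\P[\widetilde R>Mt]$. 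A separate short step handles general $B$ by comparison with $R'=\bigvee_v L(v)$, and the case $\a\ge 1$ is reduced to $\a<1$ by the substitution $A_i\mapsto A_i^{\a/\a_0}$. So the paper leverages the existing smoothing-transform result rather than running a from-scratch BRW argument.

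On your upper bound there is also a genuine misconception worth flagging. After the first-crossing decomposition and many-to-one you arrive (for $B\equiv 1$, say) at
\[
\sum_{n\ge 0}\E\bigl[e^{\a S_n}\mathbf 1\{S_n<-\log t,\ S_k\ge -\log t\ \forall k<n\}\bigr],
\]
and on this event $e^{\a S_n}<t^{-\a}$, so the sum is bounded by $t^{-\a}\sum_n\P[\tau_{-\log t}=n]=t^{-\a}\P[\tau_{-\log t}<\infty]=t^{-\a}$. In other words the first-passage structure \emph{is} the mechanism that makes the sum finite; there is no divergence and no barrier is needed. The barrier/ballot machinery you propose is the tool for problems of a different shape (e.g.\ controlling $\P[\min_{k\le n}S_k\ge 0]$), not for this one. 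For random $B$ the only extra work is an overshoot estimate using $\E[B^{\a+\delta}]<\infty$ from \ref{st:5}, together with some care because $B$ and $(A_1,A_2,\dots)$ need not be independent, so the ``independent $\widetilde B_k$'' version of many-to-one you wrote requires justification (the spine $B$ has a size-biased law in general). None of this is fatal, but your diagnosis of where the difficulty lies is off, and the paper's comparison with $\widetilde R$ sidesteps all of it.
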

\begin{proof}
We apply here estimates proved by Buraczewski and Kolesko \cite{BK} in a slightly different settings. The authors considered tails of fixed points of the inhomogeneous smoothing transform, that is solutions
to the stochastic equation \eqref{eq:smooth}. 'Inhomogeneous' means that the $B$-term do not reduce to 0.
 All solutions to this equation  were described by Alsmeyer and Meiners \cite{AM}. In particular the endogenous  solution is given by
 $$\widetilde{R} = \sum_{v \in \mathcal{T}} L(v)B(v),$$
  assuming that the above series is finite a.s. It is known that if $\a<1$, then under hypotheses of Theorem \ref{th1}, the random variable $\widetilde R$ is finite a.s.  (see \cite{BK}, Proposition 2.1) and moreover
\begin{equation}\label{eq:4}
\P[\widetilde{R}>t] \sim \widetilde{C} t^{-\a},
\end{equation}
for some $\widetilde{C} > 0$, see \cite{BK}, Theorem 1.1.

We will also need that for any strictly positive constant $\delta$, the function
\begin{equation}
\label{eq:W}
W(x) = \E\left[ \sum_{i=0}^{\infty} e^{-\delta(x+S_i)} \mathbbm{1}(S_j + x \geq 0 \enskip \text{for} \enskip j \leq i)  \right]
\end{equation}
is bounded, see \cite{BK}, Lemma 2.2.

Since the results in \cite{BK} are proved only for $\a<1$ we split the proof into two cases. First we assume that $\a<1$ and we apply directly the results stated above. Next we reduce the general situation to this case.
\medskip

{\sc Case 1.} Assume that  $\a<1$ and choose $\delta<1-\a$.

For the upper bound simply note that 
$$
\P\left[R > t \right] \leq \P\left[\widetilde{R} > t \right],
$$
and the desired estimates on right hands side come from \eqref{eq:4}. 

Lower estimates are  more difficult to prove and usually require some tricky arguments. Below we present a proof based on the result by Buraczewski and Kolesko \cite{BK}, who studied the linear stochastic equation  \eqref{eq:smooth}. However, for reader's convenience in Appendix \ref{appendix} we present a complete proof, borrowed from A\"{i}d\'{e}kon \cite{A}, based on the second moment method. To simplify the arguments we write it for a very particular case when $N$ is constant and $B=1$ a.s. (for a proof when $N$ is random see Madaule \cite{M}).\\\\
Here, we proceed in two steps.\\
{\sc Step (i).} First, let us assume that $B =1$ a.s.

For large $M > 0$, whose precise value will be specified below, we write

$$
\P\left[\widetilde{R} > M t \right] \leq \P\left[R > t \right] + \P\left[ \{R \leq t \} \cap \{\widetilde{R} > M t \} \right].
$$
Taking $\gamma = \a + \delta $, we have

\begin{equation*}
\begin{split}
\P\left[ \{R \leq t \} \cap \{\widetilde{R} > M t \} \right]& \leq \P\left[ \sum_{v \in \mathcal{T}} L(v) \mathbbm{1}(L(v') \leq t \text{ for } v' \leq v) > M t \right]\\
&\leq  \P\left[ \sum_{v \in \mathcal{T}} L^{\gamma}(v) \mathbbm{1}(L(v') \leq t \text{ for } v' \leq v) > M^{\gamma}  t^{\gamma} \right] \\
& \leq M^{-\gamma} t^{-\gamma} \E\left[ \sum_{v \in \mathcal{T}} L^{\gamma}(v) \mathbbm{1}(L(v') \leq t \text{ for } v' \leq v) \right].
\end{split}
\end{equation*}
Using the many-to-one formula \eqref{eq:5} we obtain
\begin{equation*}
\begin{split}
\E\left[ \sum_{v \in \mathcal{T}} L^{\gamma}(v) \mathbbm{1}(L(v') \leq t \text{ for } v' \leq v) \right] & = \sum_n \E\left[ \sum_{|v|=n} L^{\gamma}(v) \mathbbm{1}(L(v') \leq t \text{ for } v' \leq v) \right]\\
& = \sum_n \E\left[e^{\a S_n} e^{-\gamma S_n} \mathbbm{1}(S_k + \log{t} \geq 0 \text{ for } k \leq n) \right]\\
& = \sum_n \E\left[ e^{-\delta (S_n + \log{t})} t^{\delta} \mathbbm{1}(S_k + \log{t} \geq 0 \text{ for } k \leq n) \right]\\
& = t^{\delta} W(\log{t}),
\end{split}
\end{equation*}
for the bounded function W defined in \eqref{eq:W}. The above implies
$$
\P\left[\widetilde{R} > M t \right] \leq \P\left[R > t \right] + C_1 M^{-\gamma} t^{-\a}.
$$
On the other hand, by \eqref{eq:4}, we have the lower estimate
$$
\P\left[\widetilde{R} > M t \right] > C_2 M ^{-\a} t^{-\a},
$$
for some $C_2 > 0$ and sufficiently large $t$. Therefore, taking $M$ big enough,  we can find $C > 0$ such that
\begin{equation}\label{eq:7}
\P\left[R > t\right] > C  t^{-\a}.
\end{equation}\\
{\sc Step (ii).} We now consider general $B$. For this purpose we define $R' = \bigvee_{v \in \mathcal{T}}{{L}}(v)$. For any $M >0$ write
$$
\P \left[R' > t M\right]  \leq  \P \left[R > t \right] + \P \left[R' > t M, R \leq t \right].
$$
We apply here similar arguments as in \cite{BK} (Proposition 2.1).  Putting $f(x_1,...,x_n) = \mathbbm{1}(x_1 \geq -\log{t},...,x_{n-1} \geq -\log{t}, x_n < -\log{t})$ in the many-to-one formula \eqref{eq:5} we obtain
\begin{equation*}
\begin{split}
\P\! [R' > t M,& R \leq t ] \\
&\leq \P\!\left[L(v) > t M, L(v)B(v) \leq t \text{ for some } v \text{ and } L(u) \leq t M \text{ for } u < v \right]\\
& \leq \P\!\!\left[\sum_{v \in \mathcal{T}}\!\mathbbm{1}(L(v) > t M \text{ and } L(u) \leq t M \text{ for } u < v) \mathbbm{1}(B(v) \leq M^{-1})\! \geq 1  \right]\\
& \leq \P[B \leq M^{-1}] \sum_{n} \E \left[\sum_{|v| = n}\mathbbm{1}(L(v) > t M  \text{ and } L(u) \leq t M \text{ for } u < v) \right]\\
& = \P[B \leq M^{-1}] \sum_{n} \E \left[e^{\a S_n} \mathbbm{1}(S_n < -\log (t M) \text{ and } S_k \geq -\log (t M) \text{ for } k < n) \right]\\
& \leq \P[B \leq M^{-1}] t^{-\a} M^{-\a}.
\end{split}
\end{equation*}

From the discussion in the first step there is $C>0$ such that
$$
Ct^{-a}M^{-a} \leq \P \left[R' > t M\right]  \leq  \P \left[R > t \right] + \P[B \leq M^{-1}] t^{-\a} M^{-\a},
$$
hence
$$
 \P \left[R > t \right] \geq t^{-a}M^{-a} (C - \P[B \leq M^{-1}]),
$$
and by \ref{st:0} we can take large $M$ to ensure $\P[B \leq M^{-1}] < C$.

{\sc Case 2.} We now consider $\a\ge 1$. Take any $\a_0, \delta_0$ such that $0 < \a_0 + \delta_0 < 1$ and $\frac{\a \delta_0}{\a_0} < \delta$. Define $\left(\overline{B},\overline{A}_1, \overline{A}_2,...  \right) = \left(B^{\a / \a_0},A_1^{\a / \a_0}, A_2^{\a / \a_0},...  \right)$ and $\overline{R} = \bigvee_{v \in \mathcal{T}}{\overline{L}(v) \overline{B}}(v)$, where $\overline{L}(v)$ is defined analogously to $L(v)$ but using new weights $\overline{A}_i$. 
We write
$$
\P\left[R > t \right] = \P\left[ \bigvee_{v \in \mathcal{T}}L(v)B(v) > t \right] = \P\left[ \bigvee_{v \in \mathcal{T}}\left( \overline{L}(v)\overline{B}(v)\right)^{\a_0 / \a} > t \right] = \P\left[ \bigvee_{v \in \mathcal{T}}{\overline{L}}(v){\overline{B}}(v)  > t^{\a / \a_0} \right],
$$
and the right hand side of the above is properly bounded by arguments given in the first case to the random variable $\overline{R}$.
\end{proof}

\section{Asymptotics of $R$}
To prove the precise asymptotic of $R$ we adopt to our settings the arguments presented by Durrett and Liggett \cite{DL} (see also \cite{BB,BK}), where the problem was reduced to study asymptotic properties of solutions to a Poisson equation. The details are as follows. We define $\phi(x) = \P[R > x]$ and $D(x) = e^{\a x} \phi(e^x)$. Our aim is to prove
$$
\lim_{x \to \infty} D(x) = C.
$$

\begin{lem}
The function $D$ satisfies the following Poisson equation

\begin{equation}\label{eq:pois}
\E[D(x + Y)] = D(x) + G(x),
\end{equation}
where
$$
G(x) = e^{\a x} \E\left[\sum_{i=1}^N \phi \left(\frac{e^x}{A_i}\right) -1 + \mathbbm{1}(B\leq e^x)\prod_{i=1}^N\left(1-\phi \left(\frac{e^x}{A_i}\right) \right) \right]
$$
and $Y$ is the random variable defined in \eqref{eq:3}.
\end{lem}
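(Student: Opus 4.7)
The plan is to derive the identity by a direct computation starting from the distributional equation \eqref{eq:1} for $R$, rewriting the survival function $\phi(t)=\P[R>t]$ in terms of itself, and then recognizing the tilted measure that defines $Y$ via \eqref{eq:3}.

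First I would use \eqref{eq:1} together with independence: conditioning on $(N,B,A_1,A_2,\ldots)$ and on the inner $X_i$'s, the event $\{R\le t\}$ becomes $\{B\le t\}\cap\bigcap_{i=1}^N\{A_iX_i\le t\}$, so
$$\P[R\le t]=\E\Bigl[\mathbbm{1}(B\le t)\prod_{i=1}^{N}\bigl(1-\phi(t/A_i)\bigr)\Bigr].$$
Equivalently, $\E\bigl[\mathbbm{1}(B\le t)\prod_i(1-\phi(t/A_i))\bigr]=1-\phi(t)$. Substituting this into the definition of $G$ with $t=e^x$, the $-1$ and the $\mathbbm{1}(B\le e^x)\prod(\cdots)$ terms collapse, giving
$$G(x)=e^{\alpha x}\E\Bigl[\sum_{i=1}^{N}\phi(e^x/A_i)\Bigr]-e^{\alpha x}\phi(e^x)=e^{\alpha x}\E\Bigl[\sum_{i=1}^{N}\phi(e^x/A_i)\Bigr]-D(x).$$

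Next I would convert the remaining sum back to $D$. Since $\phi(e^y)=e^{-\alpha y}D(y)$, setting $y=x-\log A_i$ yields $\phi(e^x/A_i)=e^{-\alpha x}A_i^{\alpha}D(x-\log A_i)$, hence
$$e^{\alpha x}\E\Bigl[\sum_{i=1}^{N}\phi(e^x/A_i)\Bigr]=\E\Bigl[\sum_{i=1}^{N}A_i^{\alpha}\,D(x-\log A_i)\Bigr].$$
By the definition of $Y$ in \eqref{eq:3} applied to $f(u)=D(x+u)$, the right-hand side is exactly $\E[D(x+Y)]$. Combining with the previous display gives $G(x)=\E[D(x+Y)]-D(x)$, which is \eqref{eq:pois}.

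There is no genuine obstacle here: the argument is a bookkeeping exercise that just chains the fixed point equation with the size-biased change of measure \eqref{eq:3}. The only mild point to check is that $\P[R\le t]$ really factorizes over the children as written, which is immediate from the independence assumptions on the weighted branching tree and from the a.s.\ finiteness of $R$ established in Lemma \ref{prop1}. Everything else is algebra.
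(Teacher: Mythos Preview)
Your argument is correct and follows essentially the same route as the paper: derive the recursive identity $1-\phi(t)=\E\bigl[\mathbbm{1}(B\le t)\prod_{i=1}^N(1-\phi(t/A_i))\bigr]$ from the distributional equation, and then use the tilted law \eqref{eq:3} to identify $e^{\alpha x}\E\bigl[\sum_i\phi(e^x/A_i)\bigr]=\E[D(x+Y)]$. The only cosmetic difference is the order of the steps---you simplify $G$ first and recognize $\E[D(x+Y)]$ afterward, whereas the paper computes $\E[D(x+Y)]$ directly and then subtracts $D(x)$.
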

\begin{proof} We start with finding a recursive formula for $\phi$. For this purpose we
denote by $\mu$ the distribution of $(N,B,A_1,A_2,...)$ and write
\begin{equation*}
\begin{split}
\phi(x) & = \P[R > x] = \P\left[\bigvee_{i=1}^N A_i R_i \vee B > x\right]  = 1-\P\left[\bigvee_{i=1}^N A_i R_i \vee B \leq x\right]\\
&= 1-\P\left[A_1 R_1 \leq x,..., A_N R_N \leq x,B\leq x \right] \\
& = 1- \int \P\left[R_1 \leq \frac{x}{a_1},..., R_n \leq \frac{x}{a_n},b\leq x \right] d\mu(n,b, a_1, a_2,...)\\
&= 1- \int \mathbbm{1}(b\leq x) \prod_{i=1}^n \left(1-\phi\left( \frac{x}{a_i} \right)\right) d\mu(n,b, a_1, a_2,...) \\
& = 1- \E\left[\mathbbm{1}(B\leq x) \prod_{i=1}^N \left(1-\phi\left( \frac{x}{A_i} \right)\right) \right].
\end{split}
\end{equation*}
By the definition of $Y$ and the many-to-one formula \eqref{eq:5} we obtain
\begin{equation*}
\begin{split}
\E[D(x + Y)]  & =  \E \left[e^{\a (x+Y)} \phi(e^{x+Y}) \right] \\ & = e^{\a x}\E \left[\sum_{i=1}^N e^{-\a \log{A_i}} \phi(e^{x-\log{A_i}})  A_i^{\a}\right] \\ &= e^{\a x}\E \left[\sum_{i=1}^N \phi\left(\frac{e^x}{A_i} \right) \right].
\end{split}
\end{equation*}
Therefore
\begin{equation*}
\begin{split}
\E[D(x + Y)] - D(x) &= e^{\a x}\E \left[\sum_{i=1}^N \phi\left(\frac{e^x}{A_i} \right) \right] - e^{\a x} \left(1- \mathbbm{1}(B\leq e^{x})\E\left[\prod_{i=1}^N \left(1-\phi\left( \frac{e^x}{A_i} \right)\right) \right] \right)\\ &= G(x).
\end{split}
\end{equation*}
\end{proof}

We now show some properties of function $G$.

\begin{lem}\label{lem:4.2}
Assume \ref{st:5}. Then
$$
\lim_{x \to \infty} G(x) = 0.
$$
\end{lem}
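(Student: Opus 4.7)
The plan is to split $G(x) = G_1(x) - G_2(x)$ via $\mathbbm{1}(B\le e^x) = 1 - \mathbbm{1}(B > e^x)$, and show each piece tends to zero. Put
\begin{align*}
G_1(x) &:= e^{\a x}\,\E\!\left[\sum_{i=1}^N\phi(e^x/A_i) - \Bigl(1 - \prod_{i=1}^N(1-\phi(e^x/A_i))\Bigr)\right],\\
G_2(x) &:= e^{\a x}\,\E\!\left[\mathbbm{1}(B > e^x)\prod_{i=1}^N(1-\phi(e^x/A_i))\right].
\end{align*}
The term $G_2$ is handled immediately: since $\prod_i(1-\phi(e^x/A_i))\le 1$, Markov's inequality together with the $(\a+\delta)$-moment of $B$ from \ref{st:5} gives
$$G_2(x) \le e^{\a x}\,\P[B > e^x] \le \E[B^{\a+\delta}]\,e^{-\delta x}\to 0.$$

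For $G_1$ I would use the Bonferroni-type inequality
$$0 \le \sum_i p_i - \Bigl(1-\prod_i(1-p_i)\Bigr) \le \sum_{i<j} p_ip_j \qquad (p_i\in[0,1]),$$
which reduces the problem to $G_1(x) \le e^{\a x}\E\bigl[\sum_{i<j}\phi(e^x/A_i)\phi(e^x/A_j)\bigr]$. The upper estimate $\phi(t)\le C\min(1,t^{-\a})$ of Lemma~\ref{prop1}, combined with the elementary inequality $\min(1,z)\le z^\theta$ valid for any $\theta\in[0,1]$, yields $\phi(e^x/A_i)\le C\,A_i^{\a\theta}e^{-\a\theta x}$. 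I would then fix $\theta = \tfrac12 + \tfrac{\delta}{4\a}$ (shrinking $\delta$ if necessary so that $\delta\le 2\a$, which keeps $\theta\le 1$); this gives $\theta > \tfrac12$ and $2\a\theta = \a+\delta/2 < \a+\delta$, so
$$e^{\a x}\,\phi(e^x/A_i)\phi(e^x/A_j) \le C^{2}\,A_i^{\a\theta}A_j^{\a\theta}\,e^{-\delta x/2}.$$
Summing over pairs $i<j$ and applying AM--GM in the form $A_i^{\a\theta}A_j^{\a\theta}\le\tfrac12(A_i^{2\a\theta}+A_j^{2\a\theta})$, the exponential factor $e^{-\delta x/2}$ forces $G_1(x)\to 0$ provided the random variable $(N-1)\sum_i A_i^{\a+\delta/2}$ is integrable.

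The hard part will be exactly this joint integrability. The exponent $\a+\delta/2$ lies in the admissible window $[-\delta,\a+\delta]$ on which $\E[\sum_i A_i^s]<\infty$ (by \ref{st:5} and log-convexity of $s\mapsto m(s)$), but the extra factor $N$ forces an interpolation with the moment $\E[N^{1+\delta}]<\infty$. I would control $\E\bigl[N\sum_i A_i^{\a+\delta/2}\bigr]$ by Hölder's inequality, pairing $\E[N^{1+\delta}]^{1/(1+\delta)}$ against a higher moment of $\sum_i A_i^{\a+\delta/2}$ that is in turn bounded through the power-mean inequality $(\sum a_i)^{p}\le N^{p-1}\sum a_i^{p}$ and the finiteness of $\E[\sum_i A_i^{\a+\delta}]$. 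The parameter $\theta$ is calibrated precisely so that the exponent $2\a\theta = \a+\delta/2$ stays strictly below $\a+\delta$, leaving enough slack for this Hölder interpolation; apart from this delicate cross-moment estimate all remaining steps are routine dominated-convergence arguments.
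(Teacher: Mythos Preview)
Your decomposition $G=G_1-G_2$ and the treatment of $G_2$ are correct and coincide with the paper's argument. The difficulty is entirely in $G_1$, and there your Bonferroni route runs into a genuine obstruction.

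The Bonferroni inequality is fine, and so is the bound
\[
e^{\alpha x}\sum_{i<j}\phi(e^x/A_i)\phi(e^x/A_j)\le C\,e^{-\delta x/2}\sum_{i<j}A_i^{\alpha\theta}A_j^{\alpha\theta}
\le \tfrac{C}{2}\,e^{-\delta x/2}(N-1)\sum_{i}A_i^{\alpha+\delta/2}.
\]
But the required integrability of $(N-1)\sum_i A_i^{\alpha+\delta/2}$ does \emph{not} follow from \ref{st:5}. Take $A_1,A_2,\dots$ i.i.d.\ with a fixed nondegenerate law, independent of $N$; then $\E\bigl[N\sum_i A_i^{s}\bigr]=\E[N^2]\,\E[A_1^{s}]$. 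For any $\delta<1$ one can choose $N$ with $\E[N^{1+\delta}]<\infty$ and $\E[N^2]=\infty$, and still arrange $m(\alpha)=1$, $m'(\alpha)=0$, $\E[N]>1$, and the remaining moment conditions in \ref{st:5}. So the cross--moment you need can be infinite under the stated hypotheses. Your proposed H\"older step does not help: applying $\|N\Sigma'\|_1\le\|N\|_{1+\delta}\|\Sigma'\|_{(1+\delta)/\delta}$ and then the power--mean inequality to $\Sigma'=\sum_i A_i^{\alpha+\delta/2}$ produces $\E\bigl[N^{1/\delta}\sum_i A_i^{(\alpha+\delta/2)(1+\delta)/\delta}\bigr]$, which is another (for $\delta<1$, worse) cross--moment of exactly the same type. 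More generally, any estimate based on $\sum_{i<j}p_ip_j\le\tfrac12(\sum_i p_i)^2$ forces a second--moment condition $\E\bigl[(\sum_i A_i^{\beta})^2\bigr]<\infty$, and the lemma from \cite{BK} you would need (their Lemma~3.4) only yields exponent $p<1+\delta$, so $p=2$ is unavailable when $\delta\le1$.

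The paper avoids this by replacing the quadratic Bonferroni bound with a linear one. Using $1-p\le e^{-p}$ gives $\prod_i(1-\phi_i)\le e^{-\Sigma}$ with $\Sigma=\sum_i\phi(e^x/A_i)$, hence
\[
G_1(x)\le e^{\alpha x}\,\E\bigl[F(\Sigma)\bigr],\qquad F(u)=e^{-u}-1+u.
\]
Since $F$ is increasing and $\Sigma\le Ce^{-\alpha x}\sum_i A_i^{\alpha}$, one gets $e^{\alpha x}F(\Sigma)\le \sum_i A_i^{\alpha}\cdot H\bigl(Ce^{-\alpha x}\sum_i A_i^{\alpha}\bigr)$ with $H(u)=F(u)/u$ bounded and $H(u)\to0$ as $u\to0$. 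The dominating function is $C\sum_i A_i^{\alpha}$, which is integrable simply because $m(\alpha)=1$; no second moment or $N$--weighted moment is needed. Dominated convergence then gives $G_1(x)\to0$. The key point is that $F(u)\le u$ is linear in $u$, whereas your bound $\sum_{i<j}p_ip_j\sim\Sigma^2/2$ is quadratic, and it is precisely this quadratic growth that manufactures the spurious factor of $N$.
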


\begin{proof}
We decompose $G$ as a sum of two functions
\begin{equation} \label{eq:31}
\begin{split}
G(x) & = e^{\a x} \E\left[\sum_{i=1}^N \phi \left(\frac{e^x}{A_i}\right) -1 + \prod_{i=1}^N\left(1-\phi \left(\frac{e^x}{A_i}\right) \right) \right] - e^{\a x} \E\left[  \mathbbm{1}(B> e^{x}) \prod_{i=1}^N\left(1-\phi \left(\frac{e^x}{A_i}\right) \right) \right]\\
&=f_1(x) - f_2(x).
\end{split}
\end{equation}
Notice that $f_1$ is positive. Indeed, it is sufficient to apply
the following inequality, valid for $0 \leq u_i \leq v_i \leq 1$ (see \cite{DL}, p. 283):
$$
\prod_{i=1}^n u_i -1 + \sum_{i=1}^{n} (1-u_i) \geq \prod_{i=1}^n v_i -1 + \sum_{i=1}^{n} (1-v_i)
$$
with $u_i = 1-\phi \left(\frac{e^x}{A_i}\right)$ and $v_i =1$.

We first show that $f_1(x)$ tends to $0$. For this purpose recall an easy inequality
$$
u \leq e^{-(1-u)},
$$
valid for any real $u$ and write
\begin{equation*}
\begin{split}
 f_1(x) & \leq e^{\a x} \E\left[\sum_{i=1}^N \phi \left(\frac{e^x}{A_i}\right) -1 + \prod_{i=1}^N\left(e^{-\phi \left(\frac{e^x}{A_i}\right)} \right) \right] = e^{\a x} \E\left[\sum_{i=1}^N \phi \left(\frac{e^x}{A_i}\right) -1 + e^{-\sum_{i=1}^N \phi \left(\frac{e^x}{A_i}\right)} \right] \\
& = e^{\a x} \E\left[F \left( \sum_{i=1}^N \phi \left(\frac{e^x}{A_i}\right) \right)  \right],
\end{split}
\end{equation*}
where $F(u) = e^{-u} -1 + u$. Observe that the function $F$ is increasing on $[0,\infty)$, therefore by Lemma \ref{prop1}.
$$
e^{\a x} \E\left[F \left( \sum_{i=1}^N \phi \left(\frac{e^x}{A_i}\right) \right)  \right]  \leq e^{\a x} \E\left[F \left( e^{-\a x} \sum_{i=1}^N A_i^{\a} \right)  \right].
$$
Note that $H(u) = \frac{F(u)}{u}$ is bounded and tends to $0$ as $u \to 0$. These observations and the dominated convergence theorem give us

\begin{equation*}
\begin{split}
\limsup_{x \to \infty} f_1(x) & \leq \limsup_{x \to \infty} e^{\a x} \E\left[  F \left( e^{-\a x} \sum_{i=1}^N A_i^{\a} \right)  \right]\\
&= \limsup_{x \to \infty} e^{\a x} \E\left[\frac{F \left( e^{-\a x} \sum_{i=1}^N A_i^{\a} \right) }{e^{-\a x} \sum_{i=1}^N A_i^{\a} } e^{-\a x} \sum_{i=1}^N A_i^{\a}  \right]\\
& = \limsup_{x \to \infty} \E\left[H\left( e^{-\a x} \sum_{i=1}^N A_i^{\a} \right)  \sum_{i=1}^N A_i^{\a} \right] \\ &= \limsup_{t \to 0} \E\left[H\left( t \sum_{i=1}^N A_i^{\a} \right)  \sum_{i=1}^N A_i^{\a} \right]=0.
\end{split}
\end{equation*}
To bound $f_2$ we use Chebyshev's inequality with $\a < \beta < \a + \delta $
$$
f_2(x)=e^{\a x} \E\left[  \mathbbm{1}(B> e^x) \prod_{i=1}^N\left(1-\phi \left(\frac{e^x}{A_i}\right) \right) \right] \leq e^{\a x}  \P(B> e^x) \leq \E\big[B^{\beta}\big] e^{x(\a - \beta)} \to 0,
$$
as $x \to \infty$.
\end{proof}

\begin{lem}\label{lem:5.6}
Assume \ref{st:5}. There is $\epsilon > 0$ such that $e^{\epsilon |x|}G(x) \in L^1(\mathbb{R})$.
\end{lem}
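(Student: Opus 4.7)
The plan is to establish that $|G(x)| \leq C e^{-\epsilon|x|}$ for some $C, \epsilon > 0$; this immediately implies $e^{\epsilon'|x|} G(x) \in L^1(\R)$ for any $\epsilon' \in (0,\epsilon)$. Using the decomposition $G = f_1 - f_2$ from \eqref{eq:31} I will bound each piece separately in the two regimes $x \to -\infty$ and $x \to +\infty$.

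As $x \to -\infty$ crude bounds are enough. Since $\phi(e^x/A_i)\in[0,1]$, the bracket defining $f_1$ is dominated pointwise by $N+1$, and the bracket defining $f_2$ by $1$, whence
\[
|G(x)| \leq \big(\E[N] + 2\big)\, e^{\a x},
\]
with $\E[N] < \infty$ by \ref{st:5}.

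As $x \to +\infty$ the term $f_2$ is controlled by Chebyshev: for any $\beta\in(\a,\a+\delta]$,
\[
f_2(x) \leq e^{\a x}\,\P[B>e^x] \leq \E[B^\beta]\,e^{(\a-\beta)x},
\]
which decays exponentially with rate $\beta-\a > 0$. The delicate piece is $f_1$. Sharpening the argument in the proof of Lemma \ref{lem:4.2}, I replace the qualitative fact $F(u)/u \to 0$ with the quantitative bound $F(u) \leq u^{1+\eta}$, valid for all $u\geq 0$ and $\eta\in(0,1]$ (because $F(u) \leq u^2/2$ always and $F(u) \leq u$ for $u\geq 1$). Combined with $\phi(t) \leq K t^{-\a}$ from Lemma \ref{prop1} and monotonicity of $F$ this yields
\[
f_1(x) \leq C'\,e^{-\a\eta x}\,\E\bigg[\Big(\sum_{i=1}^N A_i^\a\Big)^{1+\eta}\bigg].
\]

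The main obstacle is thus to verify $\E\big[(\sum_i A_i^\a)^{1+\eta}\big] < \infty$ for some small $\eta>0$ using only the moments supplied by \ref{st:5}. Hölder's inequality with exponents $(\a+\delta)/\a$ and $(\a+\delta)/\delta$ gives pointwise
\[
\sum_{i=1}^N A_i^\a \leq N^{\delta/(\a+\delta)}\bigg(\sum_{i=1}^N A_i^{\a+\delta}\bigg)^{\a/(\a+\delta)},
\]
so raising to the $(1+\eta)$-th power produces $N^a Z^b$ with $Z := \sum_i A_i^{\a+\delta}$, $a+b = 1+\eta$, and $b<1$ as soon as $\eta<\delta/\a$. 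A second application of Hölder with exponents $s,t$ chosen so that $as = 1+\delta$ (so that $\E[N^{as}]=\E[N^{1+\delta}]<\infty$) and $bt \leq 1$ (so that $\E[Z^{bt}] \leq \E[Z]^{bt} < \infty$ by Jensen) closes the estimate; the inequality $bt \leq 1$ forces shrinking $\eta$ to satisfy an explicit inequality of the form $\eta \leq \delta^2/(\a(1+\delta)+\delta)$, but some such $\eta>0$ exists. Putting the three estimates together gives $|G(x)| \leq C e^{-\epsilon|x|}$ with $\epsilon = \min(\a,\,\a\eta,\,\beta-\a) > 0$, which proves the lemma.
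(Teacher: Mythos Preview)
Your proof is correct and uses the same decomposition $G=f_1-f_2$ as the paper, with identical treatment of $f_2$ and essentially the same (in fact simpler) treatment of the left tail. The genuine difference is in the right tail of $f_1$. The paper bounds $\phi(t)\le \E[R^\beta]t^{-\beta}$ with some $\beta<\a$, then makes the substitution $u=C e^{-\beta x}\sum_i A_i^\beta$ and applies Fubini to reduce $\int_0^\infty e^{\epsilon x}f_1(x)\,dx$ to the product of $\int_0^\infty F(u)\,u^{-1-(\a+\epsilon)/\beta}\,du$ and a moment of the form $\E\big[(\sum_i A_i^\beta)^{(\a+\epsilon)/\beta}\big]$, the latter being handled by an external lemma of Buraczewski--Kolesko. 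You instead use the sharp bound $\phi(t)\le K t^{-\a}$ from Lemma~\ref{prop1} together with the elementary inequality $F(u)\le u^{1+\eta}$ (valid on $[0,\infty)$ for $\eta\in(0,1]$ because $F(u)\le u^2/2$ and $F(u)\le u$), which collapses the estimate directly to $f_1(x)\le C e^{-\a\eta x}\,\E\big[(\sum_i A_i^\a)^{1+\eta}\big]$, and you then control this moment by two applications of H\"older using only the moments in \ref{st:5}. Your route is more self-contained (no reference to \cite{BK}) and gives the slightly stronger pointwise conclusion $|G(x)|\le C e^{-\epsilon|x|}$, whereas the paper only proves integrability of $e^{\epsilon x}f_1(x)$ on $[0,\infty)$; the paper's approach, on the other hand, does not need the sharp upper bound from Lemma~\ref{prop1} (only $\E[R^\beta]<\infty$ for $\beta<\a$), though this is moot since that lemma is already available.
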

\begin{proof}
Once more we use decomposition \eqref{eq:31}.
Take any $0 < \epsilon < \min(\a/2, \delta)$. Let us first consider function $e^{\epsilon |x|}f_1(x)$. To show integrability on $(\infty, 0]$, recall that $f_1$ is positive and  use Chebyshev's inequality with $\epsilon$
\begin{equation*}
\begin{split}
e^{-\epsilon x} f_1(x) & \leq e^{(\a-\epsilon) x} \E\left[\sum_{i=1}^N \phi \left(\frac{e^x}{A_i}\right) \right] +e^{(\a-\epsilon) x} \\ &\leq e^{(\a -\epsilon)x} \E\bigg[\sum_{i=1}^N e^{-\epsilon x} A_i^{\epsilon} \bigg] +  e^{(\a -\epsilon)x} \\&\le Ce^{(\a-2\epsilon)x},
\end{split}
\end{equation*}
hence the integral $\int_{-\infty}^{0} e^{-\epsilon x} f_1(x) dx$ is finite. To deal with the right tail we use the fact that  $F$ is an increasing function on $[0,\infty)$ and $F(u) \leq u$. Choose  $\beta$ such that $\frac{3}{4} \a < \beta < \a$. Again using Chebyshev's inequality we write
\begin{equation*}
\begin{split}
\int_0^{\infty}e^{\epsilon x} f_1(x) dx & \leq \int_0^{\infty} e^{(\a + \epsilon) x} \E\left[F\left(\sum_{i=1}^N \phi \left(\frac{e^x}{A_i}\right) \right) \right] dx\\
&\leq \int_0^{\infty} e^{(\a+\epsilon) x} \E\left[F\left(\sum_{i=1}^N \E[R^{\beta}] e^{-\beta x} A_i^{\beta} \right) \right] dx\\
& =\E\left[ \int_0^{\infty} e^{(\a+\epsilon) x} F\left(\E[R^{\beta}] e^{-\beta x} \sum_{i=1}^N  A_i^{\beta} \right) dx \right],
\end{split}
\end{equation*}
where the last equality holds by Fubini's theorem.

We now use a substitution $u = \E[R^{\beta}] e^{-\beta x} \sum_{i=1}^N  A_i^{\beta}$ and again by Fubini's theorem we obtain
\begin{equation*}
\begin{split}
\E\left[ \int_0^{\infty} e^{(\a+\epsilon) x} F\left(\E[R^{\beta}] e^{-\beta x} \sum_{i=1}^N  A_i^{\beta} \right)dx  \right]  & \leq \E \left[\int_{0}^{\infty} \frac{1}{\beta} \left(C \sum_{i=1}^{N} A_{i}^{\beta} \right)^{\frac{\a + \epsilon}{\beta}} \frac{F(u)}{u^{1+\frac{\a + \epsilon}{\beta}}}du \right]\\
& = C \E \left[ \left( \sum_{i=1}^{N} A_{i}^{\beta} \right)^{\frac{\a + \epsilon}{\beta}} \right] \int_{0}^{\infty} \frac{F(u)}{u^{1+\frac{\a + \epsilon}{\beta}}}du.
\end{split}
\end{equation*}
To show that the above is finite, we write
$$
\int_{0}^{\infty} \frac{F(u)}{u^{1+\frac{\a + \epsilon}{\beta}}}du = \int_{0}^{1} \frac{F(u)}{u^{1+\frac{\a + \epsilon}{\beta}}}du + \int_{1}^{\infty} \frac{F(u)}{u^{1+\frac{\a + \epsilon}{\beta}}}du.
$$
To estimate the first integral we only need to bound integrand near zero. To obtains this, it is sufficient to observe that $\lim_{u \to \infty} \frac{F(u)}{u^2} = \frac{1}{2}$ and our assumptions on $\beta$ and $\epsilon$ imply $\frac{\a + \epsilon}{\beta} < 2$. For the second integral notice that $F(u) \leq u$ for any $u \geq 0$, therefore
$$
\int_{1}^{\infty} \frac{F(u)}{u^{1+\frac{\a + \epsilon}{\beta}}} du < \infty.
$$
For the expectation factor we use the inequality
$$
\E \left[\left(\sum_{i=1}^{N}X_{i}^{1/r}  \right)^{p} \right] \leq C_{r,p}\E\left[\sum_{i=1}^{N} X_{i} \right],
$$
valid, under assumption $\E [N^{1+\delta}]<\8$ for   any sequence of positive random variables  $\{X_i\}$,  $r > 1$ and  $p \in (1,\frac{r(1+\delta)}{r+\delta})$ (see \cite{BK}, Lemma 3.4). Plugging $r = \frac{\a + \epsilon}{\a}$ and $X_{i} = A_{i}^{r \beta}$ we obtain
$$\E \left[ \left( \sum_{i=1}^{N} A_{i}^{\beta} \right)^{\frac{\a + \epsilon}{\beta}} \right] < \8.$$

Integrability of $e^{\epsilon |x|} f_2(x)$ comes easily from Chebyshev's inequality. Indeed, once more take $\a + \epsilon < \beta < \a + \delta$
$$
 e^{\epsilon |x|} f_2(x) \leq e^{\epsilon |x|} e^{\a} \P\left[B > e^x \right] \leq C \min(e^{x(\a - \epsilon)},e^{x(\a + \epsilon - \beta)}).
$$
\end{proof}

Our aim is to deduce some asymptotic properties of the function $D$, knowing that it is a solution to the Poisson equation \eqref{eq:pois} for some well behaved function $G$. A typical argument reduces the problem to the key renewal theorem, which in turn requires $G$ to be directly Riemann integrable (see \cite{F} for the precise definition). For this purpose we need to prove some local properties of $G$ and this cannot be done directly. To avoid this problem we proceed as in Goldie's paper \cite{G}
and for an integrable function $f$ we define the smoothing operator
$$
\breve{f}(x) = \int_{-\infty}^x e^{-(x-u)} f(u) du.
$$
Note that $f(x) \lessgtr M$ implies $\breve{f}(x) \lessgtr M$, $\lim_{x \to \pm \infty} f(x) = 0$ implies $\lim_{x \to \pm \infty} \breve{f}(x) = 0$ and $\int_{\mathbbm{R}} \breve{f}(x) dx = \int_{\mathbbm{R}} f(x) dx$. Moreover, $\breve{f}$ is always continuous function and if $f$ is integrable, then $\breve{f}$ is directly Riemann integrable (dRi) (see Goldie \cite{G}, Lemma 9.2).

Smoothing both sides of equation \eqref{eq:pois} we obtain
\begin{equation}\label{eq:pois2}
\E[\breve D(x + Y)] = \breve D(x) + \breve G(x).
\end{equation}
Notice that $\breve G$ has now better properties than $G$. Below we will describe asymptotic behavior of $\breve D$ and finally deduce the main result.

Define
$$
\overline{f}(x) = \int_{-\infty}^{x} \breve{f}(s)ds.
$$
The following lemma holds.

\begin{lem}
For a given function $f$ suppose that there is a positive $\epsilon$ such that $e^{\epsilon |x|} f(x) \in L^1(\mathbb{R})$. If $\int_{\mathbb{R}} {f}(s)ds = 0$ then $\overline{f}$ is dRi and $$\int_{\mathbb{R}} \overline{f}(s) ds = -\int_{\mathbb{R}} s\breve{f}(s) ds.$$
\end{lem}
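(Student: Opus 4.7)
The plan is to prove that $\overline{f}$ decays exponentially at $\pm\infty$, deduce direct Riemann integrability from this together with continuity, and establish the integral identity by integration by parts.

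\textbf{Step 1: exponential decay of $\breve{f}$.} After the change of variable $t=x-u$, one has
\[
\breve{f}(x)=\int_{0}^{\infty} e^{-t}f(x-t)\,dt.
\]
Write $g(u)=e^{\epsilon|u|}|f(u)|\in L^1$. Splitting the integral at $t=|x|/2$ and estimating each piece using either the exponential weight $g$ (on the half where $|x-t|\ge|x|/2$) or the $L^1$ norm of $f$ together with the factor $e^{-t}\le e^{-|x|/2}$, one obtains a pointwise bound $|\breve{f}(x)|\le C e^{-\eta|x|}$ for some $\eta\in(0,\epsilon\wedge 1)$.

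\textbf{Step 2: $\int \breve{f}=0$ and decay of $\overline{f}$.} Since $f\in L^1$, Fubini gives $\int_{\mathbb{R}}\breve{f}(s)\,ds=\int_{0}^{\infty} e^{-t}\int_{\mathbb{R}}f(x-t)\,dx\,dt=\int f = 0$. Therefore
\[
\overline{f}(x)=\int_{-\infty}^{x}\breve{f}(s)\,ds=-\int_{x}^{\infty}\breve{f}(s)\,ds.
\]
Using the first expression together with Step 1, for $x<0$ one gets $|\overline{f}(x)|\le \int_{-\infty}^{x} C e^{\eta s}\,ds=C'e^{\eta x}$. Using the second expression, for $x>0$ one gets $|\overline{f}(x)|\le C'e^{-\eta x}$. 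Moreover $\overline{f}$ is continuous since $\breve{f}$ is continuous and integrable. A continuous function with exponential decay at $\pm\infty$ is directly Riemann integrable, which establishes the first claim.

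\textbf{Step 3: the integral identity.} Since $\breve{f}$ is continuous, $\overline{f}$ is $C^1$ with $\overline{f}'(x)=\breve{f}(x)$. Integration by parts on $[-A,B]$ gives
\[
\int_{-A}^{B}\overline{f}(x)\,dx = \bigl[x\,\overline{f}(x)\bigr]_{-A}^{B}-\int_{-A}^{B} x\,\breve{f}(x)\,dx.
\]
The boundary terms $B\,\overline{f}(B)$ and $-(-A)\,\overline{f}(-A)$ both tend to $0$ as $A,B\to\infty$ by the exponential decay from Step 2. It remains to check $x\breve{f}(x)\in L^1$: by Fubini,
\[
\int_{\mathbb{R}}|x|\,|\breve{f}(x)|\,dx \le \int_{0}^{\infty} e^{-t}\int_{\mathbb{R}}(|u|+t)|f(u)|\,du\,dt <\infty,
\]
since $|u|\,|f(u)|\le\epsilon^{-1}e^{\epsilon|u|}|f(u)|\in L^1$. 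Letting $A,B\to\infty$ yields $\int \overline{f}=-\int s\,\breve{f}(s)\,ds$.

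\textbf{Main obstacle.} The only nontrivial point is the right-tail decay of $\overline{f}$: without the hypothesis $\int f=0$ this would fail (the function $\overline{f}$ would tend to a nonzero constant at $+\infty$) and neither dRi nor the integration-by-parts identity could succeed. This is why Step 2, which exchanges the two equivalent expressions for $\overline{f}$, is the crux of the proof.
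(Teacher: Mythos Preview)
Your proof is correct. The route differs from the paper's in how you establish the decay of $\overline{f}$. The paper exploits a short algebraic identity obtained by interchanging the order of integration,
\[
\overline{f}(x)=\int_{-\infty}^{x}\int_{-\infty}^{u}e^{-(u-s)}f(s)\,ds\,du=\int_{-\infty}^{x}f(s)\,ds-\breve{f}(x),
\]
so that $|\overline{f}(x)|\le|\breve{f}(x)|+\bigl|\int_{-\infty}^{x}f\bigr|$ (and, using $\int f=0$, the analogous bound with $\int_{x}^{\infty}f$ for $x>0$). This recycles the already-known fact (Goldie, Lemma~9.2) that $\breve{f}$ is dRi whenever $f\in L^{1}$, and the second term is controlled directly by the hypothesis $e^{\epsilon|x|}f\in L^{1}$. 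Your argument is instead self-contained: you first prove the stronger statement that $\breve{f}$ itself decays exponentially via the splitting at $t=|x|/2$, and then integrate. The paper's route is a bit shorter because it avoids your Step~1 entirely; your route has the advantage of not relying on Goldie's lemma and of giving an explicit exponential rate for $\breve f$ (which in fact makes your Step~3 check that $x\breve f(x)\in L^{1}$ immediate). For the integral identity both proofs invoke integration by parts; you simply spell out the boundary-term argument that the paper leaves implicit.
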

\begin{proof}
We only show the first property, since the second one is a consequence of the integration by parts. One has
\begin{equation*}
\begin{split}
\overline{f}(x) & = \int_{-\infty}^{x} \int_{-\infty}^{u} e^{-(u-s)} f(s)ds\: du = \int_{-\infty}^{x}e^s f(s) \int_{s}^{x} e^{-u} du\: ds\\
&= \int_{-\infty}^{x}e^s f(s)(e^{-s} - e^{-x}) ds   = \int_{-\infty}^{x}f(s) ds - \breve{f}(x).
\end{split}
\end{equation*}
Thus
$$
|\overline{f}(x)| \leq \left| \int_{-\infty}^{x}f(s) ds \right| + | \breve{f}(x) |.
$$
For $x \leq 0$ we have
$$
|\overline{f}(x)| \leq \int_{-\infty}^{x} \left| f(s)  \right|ds + | \breve{f}(x) | \leq \int_{-\infty}^{x} e^{\epsilon x}e^{-\epsilon s} \left| f(s)  \right|ds
 + | \breve{f}(x)| \leq | \breve{f}(x)|
+ C e^{\epsilon x}.
$$
Recall that $\int_{\mathbb{R}} \breve{f}(s) ds = \int_{\mathbb{R}} f(s) ds$. Similarly to the above, for $x \geq 0$ one has
$$
|\overline{f}(x)| =\left|\int_{-\infty}^{x}f(s) ds - \breve{f}(x) \right| = \left|\breve{f}(x)+ \int_{x}^{\infty}f(s) ds \right| \leq |\breve{f}(x)| + \left| \int_{x}^{\infty}f(s) ds \right| \leq  |\breve{f}(x)| + Ce^{-\epsilon x},
$$
hence $\overline{f}$ is dRi as a function bounded by dRi functions.
\end{proof}

\begin{prop}\label{prop:4}
For any $y \in \mathbb{R}$ we have
\begin{equation}\label{eq:16}
\lim_{x \to \infty}\frac{\breve{D}(x+y)}{\breve{D}(x)} = 1.
\end{equation}
\end{prop}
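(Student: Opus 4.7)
\medskip
\noindent\textbf{Proof plan.} My strategy is a compactness plus Liouville (Choquet--Deny) argument: any subsequential limit of the translated profiles $\breve D(x_n + \cdot)$ must satisfy the homogeneous Poisson equation $\E[h(x+Y)] = h(x)$, which forces $h$ to be constant by Choquet--Deny, and then \eqref{eq:16} follows by contradiction.

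\medskip
\noindent\textbf{Step 1: Uniform bounds and equicontinuity of $\breve D$.} From Lemma \ref{prop1} the function $D$ is bounded above on $\R$ by some constant $M$, and $D(x)\ge 1/C$ for $x\ge x_0$. Smoothing preserves the upper bound, so $\breve D\le M$ on $\R$; for $x\ge x_0+1$ the contribution of the integral on $[x_0,x]$ already yields $\breve D(x)\ge c_0 > 0$. Differentiating under the integral gives $\breve D'(x)=D(x)-\breve D(x)$, hence $|\breve D'|\le 2M$, so the family $\{\breve D(t+\cdot)\}_{t\in\R}$ is uniformly bounded and uniformly Lipschitz.

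\medskip
\noindent\textbf{Step 2: Contradiction setup and Arzel\`a--Ascoli.} Suppose \eqref{eq:16} fails: there exist $y_0\in\R$, $\varepsilon>0$ and $x_n\to\infty$ with $|\breve D(x_n+y_0)/\breve D(x_n)-1|\ge\varepsilon$. Passing to a subsequence and using the two-sided bounds from Step 1, I may assume $\breve D(x_n)\to a\in[c_0,M]$ and $\breve D(x_n+y_0)\to b$ with $a\ne b$. The equicontinuous bounded family $\{\breve D(x_n+\cdot)\}$ has, by Arzel\`a--Ascoli, a further subsequence converging uniformly on compact sets to a Lipschitz bounded function $h:\R\to\R$ with $h(0)=a$ and $h(y_0)=b\ne a$.

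\medskip
\noindent\textbf{Step 3: Passing to the limit in the smoothed Poisson equation.} Writing \eqref{eq:pois2} at $x_n+x$,
\[
\E\bigl[\breve D(x_n+x+Y)\bigr]=\breve D(x_n+x)+\breve G(x_n+x).
\]
Since $G$ is bounded and $G(x)\to 0$ as $x\to\infty$ by Lemma \ref{lem:4.2}, a direct estimate on the smoothing integral gives $\breve G(x_n+x)\to 0$ uniformly for $x$ in compact sets. For the left-hand side, $\breve D(x_n+x+Y)\to h(x+Y)$ pointwise in $Y$ by uniform convergence on compacts, and is dominated by $M$, so the dominated convergence theorem yields $\E[h(x+Y)]=h(x)$ for every $x\in\R$.

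\medskip
\noindent\textbf{Step 4: Choquet--Deny.} By assumptions \ref{st:2} and \ref{st:4} (together with \ref{st:5} which guarantees integrability), the random variable $Y$ defined in \eqref{eq:3} is centered, non-arithmetic and integrable. The Choquet--Deny theorem (equivalently, the fact that the centered non-arithmetic random walk $S_n$ is recurrent, so its bounded harmonic functions are constant) then implies that $h$ must be constant. This contradicts $h(0)\ne h(y_0)$ and completes the proof.

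\medskip
The main obstacle is the Choquet--Deny step in Step 4: this is precisely where the criticality condition $m'(\a)=0$ (making $Y$ centered) combines with the non-arithmetic assumption to give the Liouville property for bounded harmonic functions of $Y$. The other steps are routine verifications that the smoothed profile enjoys enough regularity and that the inhomogeneous term vanishes at infinity.
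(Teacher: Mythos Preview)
Your proof is correct and follows essentially the same route as the paper: both use the two-sided bounds from Lemma~\ref{prop1} to get uniform boundedness and equicontinuity of the smoothed profile, apply Arzel\`a--Ascoli, pass to the limit in the smoothed Poisson equation \eqref{eq:pois2} using Lemma~\ref{lem:4.2}, and conclude by Choquet--Deny. The only cosmetic difference is that the paper works with the normalized family $h_x(y)=\breve D(x+y)/\breve D(x)$ and identifies the limit directly as $h\equiv h(0)=1$, whereas you work with the unnormalized translates $\breve D(x_n+\cdot)$ and argue by contradiction; neither version requires any idea the other lacks.
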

\begin{proof}
Take $K$ big enough to ensure that for any $x \geq K$ we have  $D(x) > 0$. We define a family $\{h_x\}_{x \geq K}$ of continuous functions by
$$
h_x(y) = \frac{\breve{D}(x+y)}{\breve{D}(x)}.
$$
Such a family and its properties was already considered e.g. in \cite{BK,DL,L}, with a slightly different definition of function $D$, though.
Notice that the family $\{h_x\}_{x \geq K}$ is uniformly bounded and equicontinuous. Indeed, boundedness  is straightforward since by \eqref{eq:7} and Lemma \ref{prop1} one has $D(x) \leq C$ and $D(x) > \frac{1}{C} > 0$ for $x$ sufficiently large and hence the same holds for $\breve{D}$. To obtain equicontinuity, for $h >0$, we write
\begin{equation*}
\begin{split}
|h_x(y+h) - h_x(y)| & \leq \frac{1}{C} \left| \int_{-\infty}^{x+y+h} e^{-(x+y+h-u)} D(u) du - \int_{-\infty}^{x+y} e^{-(x+y-u)} D(u) du \right| \\
& = \frac{1}{C} \left| \int_{x+y}^{x+y+h} e^{-(x+y+h-u)} D(u) du - \int_{-\infty}^{x+y} e^{-(x+y-u)}D(u)(1-e^{-h})  du \right| \\
& \leq \frac{1}{C} \left( h + 1-e^{-h} \right) \to 0 \quad \text{as } h \to 0
\end{split}
\end{equation*}
and the very last expression does not depend on $x$ (it does not depend on $y$ as well, so we obtain even uniform equicontinuity).

In view of the  Arzel\`{a}-Ascoli theorem,  the family $\{h_x\}_{x \geq K}$ is relatively compact in the topology induced by the uniform norm on compact sets. We conclude that there is a sequence ${x_n} \to \infty$ and $h$ such that $h_{x_n} \to h$ uniformly.

Using \eqref{eq:pois2} we write
$$
\breve{D}(x+y) = \E[\breve{D}(x +y + Y)] - \breve{G}(x+y),
$$
which we divide by $\breve{D}(x)$, to obtain
\begin{equation}\label{eq:22}
h_x(y) = \E\left[h_x(y+Y)\right] - \frac{ \breve{G}(x+y)}{\breve{D}(x+y)} h_x(y).
\end{equation}

By Lemma \ref{prop1} we have $D(x) > C$ and also $\breve D(x)>0$ for any sufficiently large $x$. Thus, by Lemma \ref{lem:4.2} we see that
\begin{equation}\label{eq:12}
\lim_{x \to \infty} \frac{\breve{G}(x)}{\breve{D}(x)} = 0.
\end{equation}

Passing with $x_n \to \infty$ in \eqref{eq:22}, using \eqref{eq:12} and dominated convergence theorem yield
$$
h(y) = \E\left[h(y+Y)\right].
$$
As a consequence of Choquet-Deny theorem (see e.g. \cite{R}, Theorem 1.3 in Chapter 5), any positive $Y$-harmonic function is constant, thus we see that $h(y) = h(0) = 1$. It implies that $h$ is the unique accumulation point and \eqref{eq:16} holds.
\end{proof}

Now we are ready to prove main results.

\begin{proof}[Proof of Theorem \ref{th1}]
Let ${Y_i}$ be an i.i.d sequence with distribution defined by \eqref{eq:3}. Denote $S_n = \sum_{i=1}^n Y_i$ (we assume $S_0 = 0$). Define also stopping times $L = \inf \{n \geq 0 : S_n < 0\}$ and $T_k = \inf \{n > T_{k-1} : S_n \geq S_{T_{k-1}} \}$, $T_0 = 0$. Equation \eqref{eq:pois2} implies that for any $x$ the process
$$
M_n(x) = \breve{D}(x + S_n) - \sum_{i=0}^{n-1}\breve{G}(x + S_i)
$$
forms a martingale with respect to the natural filtration generated by ${Y_i}$. By the optional stopping theorem
$$
\E[M_{n \wedge L }] = \E[M_0(x)] = \breve{D}(x),
$$
or in another words
\begin{equation}\label{eq:15}
\E[\breve{D}(x + S_{n \wedge L})] -\breve{D}(x) = \E\left[\sum_{i=0}^{n \wedge L-1} \breve{G}(x+S_i)\right].
\end{equation}
By the duality principle (see \cite{F}, Chap. XII)
$$
\E\left[\sum_{i=0}^{ L-1}| \breve{G}(x+S_i)| \right] = \E\left[\sum_{i=0}^{\infty} | \breve{G}(x+S_{T_i}) |\right]
$$
and the right hand side series is finite since $\breve{G}$ is dRi. From Proposition \ref{prop:4} we conclude
$\breve{D}(x) \leq C e^{\epsilon |x|}$ and since $\E\left[e^{\epsilon S_L} \right] < \infty$ (see \cite{F}, Chap. XII) we can pass with $n$ to infinity in \eqref{eq:15} to get
\begin{equation}\label{eq:13}
\E[\breve{D}(x + S_{L})] -\breve{D}(x) = \E\left[\sum_{i=0}^{L-1} \breve{G}(x+S_i)\right] =: R(x).
\end{equation}
Again by the duality principle we have
$$
R(x) = \E\left[\sum_{i=0}^{\infty} \breve{G}(x+S_{T_i})\right].
$$
The key renewal theorem now yields
$$
\lim_{x \to \infty} R(x) = -\frac{\int_{\mathbb{R}} \breve{G}(x) dx}{\E[S_{T_1}]}.
$$
Integrating \eqref{eq:13} one has
$$
\int_0^x\left(  \E[\breve{D}(y + S_{L})] -\breve{D}(y) \right) dy  = \int_0^x R(y) dy,
$$
which we can rewrite as
\begin{equation}\label{eq:14}
\breve{D}(x) \cdot \E\left[ \int_0^{S_L} \frac{\breve{D}(x+y)}{\breve{D}(x)} dy \right] - \E \left[\int_0^{S_L} \breve{D}(y) dy \right]  = \int_0^x R(y) dy.
\end{equation}
The same argument as before allows us to pass with $x$ to the infinity under the integral sign, hence we have
$$
\lim_{x \to \infty} \E\left[ \int_0^{S_L} \frac{\breve{D}(x+y)}{\breve{D}(x)} dy \right] = \E[S_L] .
$$
Now we divide \eqref{eq:14} by $x$ and pass to the limit, which implies
$$
\lim_{x \to \infty} \frac{\breve{D}(x)}{x} = \lim_{x \to \infty} \frac{R(x)}{\E[S_L]} = -\frac{\int_{\mathbb{R}} \breve{G}(x) dx}{\E[S_{T_1} ]\E[S_L]}.
$$
The upper bound on $\breve{D}(x)$ show that above limit equals $0$, hence
$$
\int_{\mathbb{R}} \breve{G}(x) dx = 0.
$$
This procedure may be repeated using integrating \eqref{eq:13} with different limits, i.e. we write
$$
\int_{-\infty}^x\left(  \E[\breve{D}(y + S_{L})] -\breve{D}(y) \right) dy  = \int_{-\infty}^x \E\left[\sum_{i=0}^{\infty} \breve{G}(y+S_{T_i})\right] dy,
$$
or equivalently
$$
\breve{D}(x)\cdot\E\left[ \int_0^{S_L} \frac{\breve{D}(x+y)}{\breve{D}(x)}  dy \right] =\E\left[  \sum_{i=0}^{\infty} \int_{-\infty}^x \breve{G}(y+S_{T_i}) dy\right] = \E\left[  \sum_{i=0}^{\infty}  \overline{G}(x+S_{T_i}) \right].
$$
We pass with $x$ to the infinity and again by renewal theorem obtain
$$
\lim_{x \to \infty} \breve{D}(x) = -\frac{\int_{\mathbb{R}} \overline{G}(x) dx}{\E[S_{T_1} ]\E[S_L]} = \frac{\int_{\mathbb{R}} x\breve{G}(x) dx}{\E[S_{T_1} ]\E[S_L]}.
$$
Finally, observe that
\begin{equation*}
\begin{split}
\lim_{x \to \infty} \breve{D}(x) &= \lim_{x \to \infty} \int_{-\infty}^{x} e^{-(x-u)} D(u) du = \lim_{x \to \infty} e^{-x}\int_{0}^{e^x}  D(\log{t}) dt\\
&= \lim_{x \to \infty} \frac{1}{x}\int_{0}^{x}  D(\log{t}) dt = \lim_{t \to \infty} t^{\a} \P[R > t],
\end{split}
\end{equation*}
where the last equality is a consequence of Lemma 9.3 in \cite{G}.
\end{proof}

\appendix
\section{Lower estimates}
\label{appendix}

\begin{lem}
Assume hypotheses of Theorem \ref{th1} and suppose additionally that $N$ is constant and $B=1$ a.s. Then there exists a constant $C>0$ such that
$$
\P[R>t] \ge C t^{-\a}
$$ for sufficiently large $t$.
\end{lem}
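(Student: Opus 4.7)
The plan is to run a second-moment (Paley--Zygmund) argument on the first-passage set of the associated branching random walk. Since $B\equiv 1$, writing $S_v := -\log L(v)$ we have $R = \sup_{v\in\mathcal T} L(v) = e^{-\inf_v S_v}$, so with $r := \log t$ the claim reduces to showing
\[
\P[\inf_v S_v \le -r] \ge c\, e^{-\a r}\qquad\text{for all sufficiently large } r.
\]
Introduce the first-passage count
\[
N_r := \#\bigl\{v \in \mathcal{T} : S_v \le -r \text{ and } S_{v_k} > -r \text{ for every } k < |v|\bigr\},
\]
so that $\{\inf_v S_v \le -r\} = \{N_r \ge 1\}$. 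By the Paley--Zygmund inequality $\P[N_r \ge 1] \ge (\E N_r)^2/\E[N_r^2]$, and it suffices to establish $\E N_r \asymp e^{-\a r}$ together with $\E[N_r^2] \lesssim e^{-\a r}$.

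For the first moment I would use the stopping-line version of the many-to-one formula \eqref{eq:5}, which yields $\E N_r = \E[e^{\a S_\tau};\,\tau<\infty]$ with $\tau := \inf\{n\ge 0 : S_n \le -r\}$ for the associated random walk. By \ref{st:2}, \ref{st:4} and \ref{st:5}, $Y$ is centered, non-arithmetic and has finite exponential moments on both sides, hence $\tau<\infty$ almost surely and the overshoot $-r-S_\tau$ is tight in $r$ by standard Wiener--Hopf/overshoot theory. Consequently $c_1 e^{-\a r} \le \E N_r \le e^{-\a r}$.

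The hard part is the second moment. I would split pairs of first-passage particles according to the generation $k$ of their most recent common ancestor $w$, which necessarily satisfies $S_w>-r$. Conditionally on $\mathcal F_w$, the first-passage counts in subtrees rooted at distinct children of $w$ are independent, so combining the first-moment bound $\E_y[N_r] \le C e^{-\a(r+y)}$ applied at the shifted barrier with the moment assumption \ref{st:5} (to control the sum over pairs of children) gives
\[
\E[N_r^2] - \E[N_r] \le C'\sum_{k \ge 0} \E\Bigl[\sum_{|w|=k} \mathbbm{1}\{S_{w_j} > -r,\,j \le k\}\, e^{-2\a(r+S_w)}\Bigr].
\]
Applying \eqref{eq:5} to the inner sum over $|w|=k$ collapses the right-hand side to
\[
C' e^{-2\a r} \sum_{k \ge 0} \E\bigl[e^{-\a S_k};\,\tau>k\bigr] = C' e^{-\a r}\, W(r),
\]
where, after the shift $\tilde S=S+r$, $W$ is precisely the function of \eqref{eq:W} with $\delta=\a$. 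As $W$ is bounded uniformly in $r$ by Lemma 2.2 of \cite{BK}, we obtain $\E[N_r^2] \le C'' e^{-\a r}$, and Paley--Zygmund yields $\P[N_r \ge 1] \ge c_1^2/C'' \cdot e^{-\a r}$, which is exactly the required lower bound on $\P[R>t]$.

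The principal obstacle is the second-moment step, and in particular the algebraic identification that makes it work: after plugging the squared first-moment bound into each subtree contribution and applying many-to-one, the tilt weight $e^{\a S_k}$ exactly cancels one of the two $e^{-\a S_k}$ factors, leaving a single $e^{-\a S_k}$ in the sum over $k$; this is what collapses the series onto the bounded function $W$ rather than letting it diverge. The boundedness of $W$ at $\delta=\a$ in turn relies on the finite exponential moments of $Y$ granted by \ref{st:5}, which is the critical-case substitute for the drift that would otherwise carry the walk away from the barrier.
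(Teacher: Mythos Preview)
Your proposal is correct and follows essentially the same route as the paper's proof (which in turn is borrowed from A\"{i}d\'{e}kon): a Paley--Zygmund/second-moment argument on the first-passage count, with the first moment handled via many-to-one and overshoot tightness, and the second moment decomposed along ancestors and collapsed---after the $e^{\a S_k}$ cancellation you correctly highlight---onto the bounded function $W$ of \eqref{eq:W}. The only cosmetic difference is that the paper organizes the second-moment bound as a spine decomposition of $N_t(\le n)$ along the path to a fixed first-passage particle $v$, whereas you phrase it via the most recent common ancestor; these are equivalent bookkeeping for the same estimate.
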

\begin{proof}
The arguments presented below base on \cite{A}.  
For $v \in \mathcal{T}$ we define $\tau_t(v) = \inf\{1 \leq k \leq |v| : L(v_k) > t\}$ and for $S_n$ as in the proof of Lemma \ref{prop1} denote $\tau_t = \inf \{n : S_n < -\log t \}$. Moreover denote $Z_t = \{v \in \mathcal{T} : \tau_t(v) = |v| \}$ and let $N_t = \sum_{v \in \mathcal{T}} \mathbbm{1}\left(v \in Z_t \right)=\sum_{n} \sum_{|v|=n} \mathbbm{1}\left(v \in Z_t \right)$ be the number of elements of $Z_t$. Note that an event $\{N_t > 0\}$ is contained in  $\{ \bigvee_{v \in \mathcal{T}} L(v) > t \}$ hence it is sufficient to prove the lower bound for $\P[N_t > 0]$. For this purpose we use the so-called second moment method, i.e. by the Cauchy-Schwarz inequality one has
$$
(\E[N_t])^2 = (\E[N_t \mathbbm{1}(N_t > 0)])^2 \leq \E[N_t^2] \P[N_t > 0],
$$
which implies
$$
\P[N_t > 0] \geq \frac{(\E[N_t])^2 }{\E[N_t^2]}.
$$
For $x \geq 1$ we call $\P^x$ the distribution such that
$\P^x \left[L(\varnothing) = x\right] = 1$ (hence $\P = \P^1$) and $\E^x$ the corresponding expectation. First, we will show that there is a positive constant $C$ such that for sufficiently large $t$ the following holds
\begin{equation}\label{eq:4.3}
C \leq t^\a x^{-\a} \E^x[N_t] \leq 1.
\end{equation}
We define for any $s \geq 1$ the undershoot $L_s = -\log s - S_{\tau_{s}}$. Using the many-to-one formula \eqref{eq:5} we obtain
\begin{equation*}
\begin{split}
\E^x[N_t] &= \E^x \left[\sum_{v \in \mathcal{T}}\mathbbm{1}\left(\tau_t(v) = |v| \right) \right]  = \sum_n \E^x \left[\sum_{|v|=n} \mathbbm{1}\left(\tau_t(v) = |v| \right) \right] = \sum_n \E^x \left[e^{\a S_n} \mathbbm{1}\left(\tau_t =n \right) \right]\\
&= (t/x)^{-\a} \E \left[e^{\a (S_{\tau_{t/x}} + \log (t/x))} \right] = (t/x)^{-\a} \E \left[e^{-\a L_{t/x}} \right].
\end{split}
\end{equation*}
 Obviously, for any $s$ one has $\E \left[e^{-\a L_s}\right] \leq 1$, since $L_s > 0$. For the lower bound note that for any positive $M$
$$
 \E\left[e^{-\a L_s} \right]\geq  e^{-\a M}\P\left[L_s \leq M \right].
$$
Choosing appropriately large $M$, there is a constant $C$ such that  $\P\left[L_s \le  M \right] >C> 0$, uniformly with respect to $s$ (see \cite{CH}, Proposition 4.2) which shows \eqref{eq:4.3}.

Let us define $N_t(n) = \sum_{|v|=n} \mathbbm{1}(v \in Z_t)$ and $N_t(\leq n) = \sum_{k\leq n}\sum_{|v|=k} \mathbbm{1}(v \in Z_t)$. We have

$$
\E[N_t^2] \leq 2 \sum_n \E \left[N_t(n) \sum_{k\leq n} N_t(k) \right]  = 2 \sum_n \E \left[\sum_{|v|=n} \mathbbm{1}(v \in Z_t) N_t(\leq n) \right].
$$
To estimate $\E \left[\mathbbm{1}(v \in Z_t) N_t(\leq n) \right]$ we decompose $N_t(\leq n)$ along the vertex $v$, i.e. we write
$$
N_t(\leq n) = 1 + \sum_{k=0}^{n-1} N_t^{v_k},
$$
where $N_t^{v_k}$ is the number of descendants $u$ of $v_k$ at the level at most $n$ which are not descendants of $v_{k+1}$ and such that $u$ is an element of $Z_t$. Denote by $\mathcal{F}_n$ the $\sigma$-algebra generated by the tree up to the level $n$. The above implies that
\begin{equation*}
\begin{split}
\E \left[N_t^{v_k}| \mathcal{F}_k \right]& \leq (N-1)\left(\E^{L(v_k)} \left[\mathbbm{1}(A(u_1) \leq t) N_t \right] + \P^{L(v_k)} \left[ A(u_1) > t \right] \right) \\
& \leq (N-1) \left(C_1 \E^{L(v_k)} \left[N_t \right] + C_2 L^{\a}(v_k) t^{-\a} \right) \\
& \leq C t^{-\a} L^{\a}(v_k),
\end{split}
\end{equation*}
 hence using \eqref{eq:5}
\begin{equation*}
\begin{split}
\E[N_t^2] & \leq C\left(\E[N_t] + t^{-\a} \sum_n \E \left[\sum_{|v|=n} \sum_{k=0}^{n-1} \mathbbm{1}(v \in Z_t) L^{\a}(v_k) \right]  \right)\\
& \leq C_1 t^{-\a} + C_2 t^{-\a} \E \left[ \sum_n e^{\a S_n} \sum_{k=0}^{n-1} e^{-\a S_k} \mathbbm{1}(\tau_t = n) \right]\\
& \leq C_1 t^{-\a} + C_2 t^{-\a} \E \left[ \sum_{k=0}^{\tau_t-1} e^{-\a (S_k+ \log t)} \right]\\
&  = C_1 t^{-\a} + C_2 t^{-\a} W(\log t)\\
& \leq C t^{-\a},
\end{split}
\end{equation*}
 for the bounded function $W$ defined by \eqref{eq:W}.
Now we see that $\P[N_t > 0] \geq \frac{(\E[N_t])^2 }{\E[N_t^2]} \geq C t^{-\a}$.
\end{proof}

\end{document}